\title{\vspace{-0.6cm} On local Tur\'an problems}
\newtheorem{theorem}{Theorem}[section]
\newtheorem{proposition}[theorem]{Proposition}
\newtheorem{lemma}[theorem]{Lemma}
\newtheorem{conjecture}[theorem]{Conjecture}
\newtheorem{definition}[theorem]{Definition}
\date{}
\author{Peter Frankl \thanks{Renyi Institute, Budapest. Email: peter.frankl@gmail.com.} \and Hao Huang \thanks{Department of Mathematics, Emory University, Atlanta, USA. Email: hao.huang@emory.edu. Research supported in part by a Collaboration Grant from the Simons Foundation, NSF CAREER grant DMS-1945200, and an Alfred P. Sloan Fellowship.} \and Vojt\v ech R\"odl \thanks{Department of Mathematics, Emory, University, Atlanta, USA. Email: vrodl@emory.edu. Research supported in part by NSF grant DMS-1764385.}}
\begin{document}

\maketitle
\abstract{Since its formulation, Tur\'an's hypergraph problems have been among the most challenging open problems in extremal combinatorics. One of them is the following: given a $3$-uniform hypergraph $\mathcal{F}$ on $n$ vertices in which any five vertices span at least one edge, prove that $|\mathcal{F}| \ge (1/4 -o(1))\binom{n}{3}$. The construction showing that this bound would be best possible is simply $\binom{X}{3} \cup \binom{Y}{3}$ where $X$ and $Y$ evenly partition the vertex set. This construction has the following more general $(2p+1, p+1)$-property: any set of $2p+1$ vertices spans a complete sub-hypergraph on $p+1$ vertices. One of our main results says that, quite surprisingly, for all $p>2$ the $(2p+1,p+1)$-property implies the conjectured lower bound.

\section{Introduction}
Let $X$ be a finite set and $\binom{X}{r}$ the collection of all its $r$-subsets. Subsets $\mathcal{H}$ of $\binom{X}{r}$ are called $r$-uniform hypergraphs. Members of $\mathcal{H}$ are called edges. If $\binom{Y}{r} \subset \mathcal{H}$, then $Y$ is said to be a clique and $|Y|$ is its size. We denote by $K_t^r$ the $r$-uniform $t$-vertex clique. Note that every edge is a clique of size $r$. 

For integers $q \ge p \ge r \ge 2$, we say that $\mathcal{H}$ has property $(q, p)$ if for every $Z \in \binom{X}{q}$ there exists $Y \subset \binom{Z}{p}$ spanning a clique in $\mathcal{H}$, that is, $\binom{Y}{r} \subset \mathcal{H}$.

\begin{definition}
Let $T_r(n, q, p)=\min\{|\mathcal{H}|: \mathcal{H} \subset \binom{[n]}{r},~\mathcal{H}~\textrm{has~property~}(q, p)\}$. Set also $t_r(n, q, p)=T_r(n, q, p)/\binom{n}{r}$.
\end{definition}

Eighty years ago, Tur\'an \cite{turan} determined $T_2(n, q, 2)$ and this result served as the starting point for a lot of research that led to the creation of the field of extremal graph theory. About two decades later Tur\'an \cite{turan_conj} proposed two conjectures concerning $T_3(n, 4, 3)$ and $T_3(n, 5, 3)$. To state their asymptotic forms, let us mention that Katona, Nemetz and Simonovits \cite{KNS} used a simple averaging argument to show that $t_r(n, q, p)$ is monotone increasing as a function of $n$. Consequently the limit
$$\lim_{n \rightarrow \infty} t_r(n, q, p)=:t_r(q, p)$$ exists. 
\begin{conjecture}(Tur\'an)
\begin{equation}\label{eq_k43}
t_3(4, 3)=\frac{4}{9}.
\end{equation} 

\begin{equation}\label{eq_k53}
t_3(5, 3)=\frac{1}{4}.
\end{equation} 

\end{conjecture}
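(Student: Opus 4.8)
Since here $p=r=3$, property $(q,3)$ says exactly that every $q$-subset of the vertex set spans an edge of $\mathcal H$; equivalently, the complement $\mathcal G:=\binom{[n]}{3}\setminus\mathcal H$ contains no $K_q^3$. Hence $T_3(n,q,3)=\binom{n}{3}-\mathrm{ex}(n,K_q^3)$ and, letting $n\to\infty$ with the monotonicity recorded above, $t_3(q,3)=1-\pi(K_q^3)$, where $\pi$ is the Tur\'an density. Thus \eqref{eq_k53} is the assertion $\pi(K_5^3)=\tfrac{3}{4}$ and \eqref{eq_k43} is $\pi(K_4^3)=\tfrac{5}{9}$, and each splits into a construction (the lower bound on $\pi$, i.e.\ the upper bound on $t_3$) and a density bound (the upper bound on $\pi$). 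The constructions are classical and I would only record them: for $q=5$, the complement of the balanced family $\binom{X}{3}\cup\binom{Y}{3}$ of the abstract is $K_5^3$-free (any $5$ vertices leave at least $3$ on one side, so a triple there lies in $\mathcal H$, not $\mathcal G$) and has density tending to $\tfrac{3}{4}$; for $q=4$, partition $[n]=V_0\cup V_1\cup V_2$ into near-equal classes and let $\mathcal G$ consist of all triples meeting every $V_i$ together with all triples having two vertices in some $V_i$ and the third in $V_{i+1}$ (indices mod $3$) --- checking the four partition types $4,\ 3{+}1,\ 2{+}2,\ 2{+}1{+}1$ of the vertex count shows that no four vertices span a $K_4^3$, while the density tends to $\tfrac{2}{9}+\tfrac{1}{3}=\tfrac{5}{9}$.

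\textbf{The density upper bounds: the real content.}
Everything is in proving $\pi(K_4^3)\le\tfrac{5}{9}$ and $\pi(K_5^3)\le\tfrac{3}{4}$. My primary plan is the semidefinite (flag-algebra) method: by the monotonicity above it suffices to bound the edge density of $K_q^3$-free hypergraphs on a bounded vertex set, so one writes the edge density as the average, over all $k$-vertex induced subhypergraphs for a fixed moderate $k$, of their edge densities, and searches for a sum-of-squares certificate --- a nonnegative combination of ``squared'' densities of smaller rooted subhypergraphs --- forcing this average below $\tfrac{3}{4}$ (resp.\ $\tfrac{5}{9}$). Concretely one sets up the level-$k$ semidefinite program, block-diagonalises it via the $S_k$-symmetry, solves it numerically, and rounds to an exact rational certificate. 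A complementary, more structural line: pass to a vertex $v$ whose link graph $L_v\subseteq\binom{[n]\setminus v}{2}$ has near-maximum size, use that $K_q^3$-freeness forbids certain cliques of $L_v$ from being edges of $\mathcal G$ (for $q=4$: no triangle of $L_v$ is an edge of $\mathcal G$), try to upgrade this to a \emph{stability} statement placing any near-extremal $\mathcal G$ close to one of the two constructions, and then pin down the exact value by a local deletion/shifting argument.

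\textbf{The main obstacle.}
This last step is exactly where eighty years of effort have stalled, and I expect it to be the genuine difficulty. Unlike the graph case $\pi(K_t^2)$, the extremal hypergraphs here are far from unique --- there are exponentially many non-isomorphic $K_4^3$-free hypergraphs of the conjectured density (Kostochka; Fon-der-Flaass; Brown) --- so no clean stability picture is available to anchor a local argument; moreover the best flag-algebra certificates known yield only roughly $\pi(K_4^3)\le 0.562$ and $\pi(K_5^3)\le 0.77$, strictly above $\tfrac{5}{9}=0.555\ldots$ and $\tfrac{3}{4}$, and it is widely expected that no bounded-level certificate of the usual form can reach the truth. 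A genuine proof would therefore need a new ingredient --- plausibly an infinite family of identities underlying an SOS-type argument, or a purely combinatorial weighting that simultaneously accounts for all the extremal configurations --- and producing it is the heart of the matter. In short, what I can offer is a route one \emph{would} take rather than a proof that completes: the statement is a long-standing open conjecture.
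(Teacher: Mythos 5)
The statement you were asked to prove is not a theorem of this paper at all: it is Tur\'an's conjecture, stated in the introduction purely as motivation, and the paper offers no proof of either \eqref{eq_k43} or \eqref{eq_k53}. Indeed the paper explicitly records that neither statement has been proved, cites Razborov's flag-algebra bound $t_3(4,3)\ge 0.438334$ as the current record for \eqref{eq_k43}, and in the concluding remarks notes that the case $p=2$ of $t_3(2p+1,p+1)$, i.e.\ \eqref{eq_k53}, remains exactly Tur\'an's open problem for $K_5^3$; the theorems actually proved here concern $t_3(2p+1,p+1)$ for $p\ge 3$ and the limiting behaviour of $t_r(ap+1,p+1)$. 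So there is no ``paper proof'' against which to measure your attempt.

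Your write-up is accurate as far as it goes: the translation $t_3(q,3)=1-\pi(K_q^3)$ is correct, the two extremal constructions (the bipartition construction for $K_5^3$ and the cyclically oriented tripartition for $K_4^3$) are the right ones and your verification that they are $K_q^3$-free is sound, and you are candid that the density upper bounds $\pi(K_4^3)\le 5/9$ and $\pi(K_5^3)\le 3/4$ are the entire content and that your flag-algebra/stability outline does not deliver them. That candour is appropriate, but it also means the proposal is not a proof and cannot be graded as one: the missing step (an exact sum-of-squares certificate or a stability argument compatible with the large family of extremal configurations) is precisely the eighty-year-old open problem, and nothing in this paper closes it either. The honest assessment is that both your attempt and the paper leave \eqref{eq_k43} and \eqref{eq_k53} open; the paper's contribution is the adjacent results (Theorems \ref{thm_limit} and \ref{thm_3_uniform}), not the conjecture itself.
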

Even though this conjecture has been around for quite a long time, neither statement was proved. For \eqref{eq_k43} the best known bound stands as $t_3(4, 3) \ge 0.438334$ by Razborov \cite{razborov} using flag algebra. As for \eqref{eq_k53}, the construction providing the upper bound is very simple, namely $\mathcal{H}=\binom{X_1}{3} \cup \binom{X_2}{3}$, with $X_1 \sqcup X_2=[n]$, $|X_1|=\lceil \frac{n}{2} \rceil$, $|X_2|=\lfloor \frac{n}{2} \rfloor$.

Let us mention that in \cite{erdos-spencer} it was shown that for the graph case, 
\begin{equation}\label{eq_graph}
t_2(q, p)=1/\left\lfloor \frac{q-1}{p-1} \right\rfloor.
\end{equation}
For general $r$, Frankl and Stechkin \cite{frankl-stechkin} proved that
\begin{equation}\label{eq_small_q}
t_r(q, p)=1~~~~\textrm{if}~q \le \frac{r}{r-1} (p-1).
\end{equation}
It is easy to check that $\mathcal{H}=\binom{X_1}{r} \cup \binom{X_2}{r}$ has property $(2p+1, p+1)$ for all $p \ge r-1$. Consequently, 
\begin{equation}\label{eq_upper}
t_r(2p+1, p+1) \le \frac{1}{2^{r-1}}.
\end{equation}
For the case $r=3$, it was proved by the first author \cite{frankl} that
\begin{equation}\label{eqn_lim_3}
\lim_{p \rightarrow \infty} t_3(2p+1, p+1)=\frac{1}{4}.
\end{equation}

By developing the methods used in \cite{frankl}, in Section \ref{sec_main} we generalize \eqref{eqn_lim_3} to the $r$-uniform case.
\begin{theorem}\label{thm_limit}
For integers $r \ge 2 $ and $a \ge 2$, 
$$\lim_{p \rightarrow \infty} t_r(ap+1, p+1)=\frac{1}{a^{r-1}}.$$ 
\end{theorem}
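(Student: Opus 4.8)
The plan is to establish separately that $\limsup_{p\to\infty}t_r(ap+1,p+1)\le a^{-(r-1)}$ and $\liminf_{p\to\infty}t_r(ap+1,p+1)\ge a^{-(r-1)}$. For the upper bound I would use the partition construction: for any $p$ and $n$, split $[n]=X_1\sqcup\cdots\sqcup X_a$ as evenly as possible and take $\mathcal H_0=\bigcup_{i=1}^a\binom{X_i}{r}$. Among any $ap+1$ vertices some $X_i$ contains at least $\lceil (ap+1)/a\rceil=p+1$ of them, so those span a clique; hence $\mathcal H_0$ has property $(ap+1,p+1)$, while $|\mathcal H_0|=\sum_{i=1}^a\binom{|X_i|}{r}=(a^{-(r-1)}+o(1))\binom nr$. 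This gives $t_r(ap+1,p+1)\le a^{-(r-1)}$ for every $p$ (the $a\ge 2$ analogue of \eqref{eq_upper}), hence the upper bound on the limit.

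For the lower bound, fix $\epsilon>0$; since $t_r(n,q,p)$ is non-decreasing in $n$ \cite{KNS}, it suffices to show that for every sufficiently large $p$ there is an $n$ such that every $\mathcal H\subseteq\binom{[n]}{r}$ with property $(ap+1,p+1)$ has $|\mathcal H|\ge(a^{-(r-1)}-\epsilon)\binom nr$. Passing to the complement $\mathcal N=\binom{[n]}{r}\setminus\mathcal H$, this is the statement that $|\mathcal N|>(1-a^{-(r-1)}+\epsilon)\binom nr$ forces the existence of $Z\in\binom{[n]}{ap+1}$ with $\mathcal H[Z]$ containing no $K^r_{p+1}$, i.e.\ with $\alpha(\mathcal N[Z])\le p$. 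To produce such a $Z$ I would look for a (not necessarily induced) copy inside $\mathcal N$ of one fixed $r$-graph $F=F_{a,r,p}$ on $ap+1$ vertices with $\alpha(F)\le p$: the image $Z$ of such a copy then has $\alpha(\mathcal N[Z])\le\alpha(F)\le p$, since enlarging an $r$-graph only decreases its independence number. The natural candidate is the complement of the extremal $K^r_{p+1}$-free configuration on $ap+1$ vertices: split those vertices into $a+1$ classes $V_1,\dots,V_{a+1}$ as equally as possible (so each $|V_i|\le p$) and let $F_{a,r,p}$ be the family of all $r$-sets meeting at least two classes. Since any independent set of $F_{a,r,p}$ either lies in a single class or has fewer than $r$ vertices, $\alpha(F_{a,r,p})\le p$, and by the definition of the Tur\'an density $\mathcal N$ contains $F_{a,r,p}$ once $n$ is large and the density of $\mathcal N$ exceeds $\pi(F_{a,r,p})$. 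So the lower bound reduces to the estimate $\limsup_{p\to\infty}\pi(F_{a,r,p})\le 1-a^{-(r-1)}$. The reverse inequality $\pi(F_{a,r,p})\ge 1-a^{-(r-1)}$, showing that no cheaper target would do, is straightforward: for $p\ge r-1$ the complement of $\bigcup_{i=1}^a\binom{Y_i}{r}$ for an even partition $Y_1\sqcup\cdots\sqcup Y_a=[n]$ has density $\to 1-a^{-(r-1)}$ and contains no copy of $F_{a,r,p}$, because in any $ap+1$ of its vertices some $Y_i$ is met in $\ge p+1$ places, and a pigeonhole count shows these cannot be spread among the classes $V_j$ without some $|V_j|\ge p+1$.

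The hard part is the bound $\pi(F_{a,r,p})\le 1-a^{-(r-1)}+o_p(1)$, and this is where I expect the method of \cite{frankl} to be developed. Unwinding it, with $\mathcal M=\overline{\mathcal N}$ (which is the original $\mathcal H$): one must show that a sparse $\mathcal M$ of density $<a^{-(r-1)}-\epsilon$ contains a set $Z$ of $ap+1$ vertices together with a balanced partition of $Z$ into $a+1$ classes in which every edge of $\mathcal M[Z]$ lies inside a single class — equivalently, so that the connected components of $\mathcal M[Z]$ split into $a+1$ groups of nearly equal size. The obstructions are exactly the "dense spots" of $\mathcal M$, induced subgraphs close to a complete $K^r_{p+1}$, which cannot be broken across classes; away from those, a sparse $\mathcal M$ has abundantly many $(ap+1)$-subsets inducing only tiny components, which are harmless. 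I would try to run this as an induction — on $a$, or on the multipartite structure of $F_{a,r,p}$ — peeling off one class at a time and incurring only the $a^{-(r-1)}$-type loss already built into the construction; the base case $r=2$, where $F_{a,2,p}$ is the balanced complete $(a+1)$-partite graph with $\pi=1-1/a$, is contained in \eqref{eq_graph}. The genuinely delicate point, and the main obstacle, is the error bookkeeping: one has to describe the structure of $\mathcal M$ precisely enough that the accumulated error stays $o_p(1)$ and does not overwhelm the main term — which is exactly the kind of careful local argument that \cite{frankl} carries out for $r=3$, $a=2$.
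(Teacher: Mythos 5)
Your upper bound is correct and identical to the paper's. Your reduction of the lower bound to a Tur\'an-density statement is also a legitimate reformulation: finding in the complement $\mathcal N$ a (not necessarily induced) copy of a fixed $(ap+1)$-vertex $r$-graph $F_{a,r,p}$ with $\alpha(F_{a,r,p})\le p$ would indeed suffice, and your verification that $\alpha(F_{a,r,p})\le p$ and that $\pi(F_{a,r,p})\ge 1-a^{-(r-1)}$ is fine. But the entire content of the theorem is the remaining inequality $\pi(F_{a,r,p})\le 1-a^{-(r-1)}+o_p(1)$ (equivalently, the density lower bound for hypergraphs with property $(ap+1,p+1)$), and you do not prove it: you explicitly defer it to ``the method of \cite{frankl}'' and offer only a heuristic about dense spots, an unspecified induction on $a$, and error bookkeeping that you yourself identify as the main obstacle. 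That is a genuine gap, not a proof. Moreover, committing to one fixed target configuration $F_{a,r,p}$ is a strictly stronger demand than the theorem requires, and it is not clear your induction would close (the concluding remarks of the paper show that for finite $p$ the analogous single-configuration statement can fail, e.g.\ $r=4$, $p=3$, so any such argument must genuinely use $p\to\infty$).

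For contrast, the paper avoids Tur\'an densities of specific configurations entirely. It starts from property $(aL,L)$ for $L$ much larger than the final scale $\ell$, defines a $(w,v)$-hole as a $w$-set whose induced clique number $v$ satisfies $w>av$, and repeatedly deletes holes of size at most $a\ell$. The key observation is a potential-function argument: each deletion turns property $(q,p)$ into property $(q-w,p-v)$ (Lemma \ref{lem_hole}) and strictly increases the excess $ap-q$, which is bounded above by $(a-1)q$; hence only boundedly many deletions occur and only a negligible fraction of vertices is lost. The surviving hypergraph has property $(s,\lceil s/a\rceil)$ for \emph{all} $s\le a\ell$ simultaneously, and a short induction (Lemma \ref{lem_hereditary}) then shows every $a\ell$-set spans at least $a\binom{\ell}{r}$ edges; averaging over random $a\ell$-sets gives density at least $a\binom{\ell}{r}/\binom{a\ell}{r}\to a^{-(r-1)}$. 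If you want to salvage your outline, this hole-removal step is precisely the mechanism that replaces your missing ``hard part.''
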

In the $3$-uniform case (when $r=3$),  we are able to determine the exact value of $t_3(2p+1, p+1)$, for all $p \ge 3$, which strengthens \eqref{eqn_lim_3}.
\begin{theorem}\label{thm_3_uniform}
	
For every integer $p \ge 3$, 
$$t_3(2p+1, p+1)=\frac{1}{4}.$$ 
\end{theorem}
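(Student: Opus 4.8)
The bound $t_3(2p+1,p+1)\le\frac{1}{4}$ is \eqref{eq_upper} specialised to $r=3$, so the content is the matching lower bound $t_3(2p+1,p+1)\ge\frac{1}{4}$ for every $p\ge 3$. Since $t_3(2p+1,p+1)=\lim_{n\to\infty}t_3(n,2p+1,p+1)=\sup_n t_3(n,2p+1,p+1)$ by the monotonicity from \cite{KNS}, it is enough to prove that, for each fixed $p\ge 3$, every $\mathcal H\subseteq\binom{[n]}{3}$ with property $(2p+1,p+1)$ satisfies $|\mathcal H|\ge\bigl(\frac{1}{4}-o_n(1)\bigr)\binom n3$, where the error term may depend on $p$.

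My plan is to establish a stability-type structure statement and then finish with an elementary convexity estimate. The structure statement I would aim for is: \emph{if $\mathcal H$ has property $(2p+1,p+1)$ with $p\ge 3$ and $|\mathcal H|\le\bigl(\frac{1}{4}+\varepsilon\bigr)\binom n3$, then after deleting $o(n)$ vertices there are sets $V_1,\dots,V_C$ with $C=C(p,\varepsilon)$, covering the remaining vertices, on each of which $\mathcal H$ is missing only $o\bigl(\binom n3\bigr)$ triples, and such that the family of $(p+1)$-subsets of the $V_i$ still covers every $(2p+1)$-set.} The hypothesis $p\ge 3$ is used crucially here: a witnessing $(p+1)$-clique then contains a copy of $K_4^3$, and it is this extra rigidity — which collapses when $p=2$, i.e. for Tur\'an's notoriously hard $(5,3)$ problem — that should force the witnessing cliques of overlapping $(2p+1)$-sets to glue up into a bounded number of almost-complete clusters. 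Concretely I would (i) pass to a large sub-hypergraph on which a choice of witnessing cliques behaves regularly, via a $\Delta$-system/sunflower clean-up; (ii) show that two witnessing $(p+1)$-cliques with sufficiently large intersection span an almost-complete set; and (iii) iterate to bound the number of maximal such clusters.

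Granting the structure, the finish is short. The $V_i$ cover all but $o(n)$ vertices of $[n]$ by construction, and the $o(n)$ exceptional vertices lie in only $o\bigl(\binom n3\bigr)$ triples; since the $(p+1)$-subsets of the $V_i$ capture every $(2p+1)$-set, a convexity/covering estimate gives $\bigl|\bigcup_i\binom{V_i}{3}\bigr|\ge\bigl(\frac{1}{4}-o_n(1)\bigr)\binom n3$, the extreme case being $C=2$ with $|V_1|\approx|V_2|\approx\frac{n}{2}$. Hence $|\mathcal H|\ge\bigl(\frac{1}{4}-o_n(1)\bigr)\binom n3$, since $\mathcal H$ differs from the hypergraph with each $V_i$ completed in only $o\bigl(\binom n3\bigr)$ triples. (Alternatively, that completed hypergraph retains property $(2P+1,P+1)$ for all $P$, and feeding it into Theorem~\ref{thm_limit} yields the same bound.)

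The hard part is the structure statement, and what makes it delicate is precisely that one cannot first prove the exact Tur\'an bound and then deduce stability in the usual way — the bound is the goal — so the clustering must be extracted directly from the covering property using the $K_4^3$-rigidity available for $p\ge 3$. I expect this step to require the full machinery "developed from \cite{frankl}" mentioned in the introduction, with the quantitative bookkeeping (how "almost complete", $C$ and $P$ all scale with $\varepsilon$ and $n$) being where most of the effort goes.
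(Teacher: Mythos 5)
Your proposal does not contain a proof: everything rests on the ``structure statement'' in the second paragraph, which you only announce as a target (``the structure statement I would aim for'', ``I expect this step to require\dots''). None of its three steps is carried out, and step (ii) in particular --- that two witnessing $(p+1)$-cliques with large intersection span an almost-complete set --- is not something that follows from the $(2p+1,p+1)$ property by any argument you give; the heuristic that a $(p+1)$-clique contains a $K_4^3$ and that this ``rigidity'' forces the witnesses to glue into $O(1)$ near-complete clusters is exactly the kind of stability assertion that, for this problem, nobody knows how to prove directly. (Note also that such a stability statement is strictly stronger than the theorem itself, since it would pin down the near-extremal structure, not just the density.) So the entire difficulty of the theorem is concentrated in an unproved claim.

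The paper's actual route is different and avoids any structural analysis of $\mathcal{H}$. Passing to the complement, property $(2p+1,p+1)$ fails as soon as the complement contains some fixed $(2p+1)$-vertex $3$-graph with independence number at most $p$; so the lower bound $t_3(2p+1,p+1)\ge 1/4$ reduces to exhibiting, for each $p\ge 3$, such a hypergraph with Tur\'an density at most $3/4$. For odd $p$ this is the disjoint union of the Mubayi--R\"odl hypergraph $\mathcal{R}$ (with $\pi(\mathcal{R})=3/4$, Proposition~\ref{lem_7}) and $(p-3)/2$ copies of $K_4^3$; for even $p$ it is a $9$-vertex blow-up $\mathcal{T}'$ of the Baber--Talbot hypergraph $\mathcal{T}$ (with $\pi(\mathcal{T})=3/4$, proved by flag algebras) together with $(p-4)/2$ copies of $K_4^3$, using that $\pi$ of a disjoint union is the maximum of the densities and that blow-ups do not increase $\pi$. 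These two external Tur\'an-density results are the real content of the proof; the ``machinery developed from \cite{frankl}'' that you invoke is used only for the asymptotic Theorem~\ref{thm_limit}, not here. To salvage your approach you would have to actually prove the clustering lemma, and there is no indication that this is feasible with current techniques.
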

We should remark that the proof of this result is relying on earlier Tur\'an-type results of Mubayi and R\"odl \cite{MR}, and Baber and Talbot \cite{BT}. We are going to state these results in Section \ref{sec_3} before proving Theorem \ref{thm_3_uniform}. In Section \ref{sec_cr} we mention some open problems.

\section{Proof of Theorem \ref{thm_limit}}\label{sec_main}
Throughout the proof of Theorem \ref{thm_limit}, we assume $r \ge 3$, and $a \ge 2$ to be fixed, since the $r=2$ case is already covered by \eqref{eq_graph}. With $r$ fixed, we also set $t(q, p)=t_r(q, p)$. For the pair $(q, p)$ with $q \le ap$, we call $ap-q$ the {\it excess} $e(q, p)$ of the pair $(q, p)$. Note that since $q \ge p$, we always have $e(q, p) \le aq-q=(a-1)q$. For $\mathcal{F} \subset \binom{Y}{r}$, the set $Z$ is a $(w, v)$-{\it hole} if $|Z|=w$, the clique number of $\mathcal{F}|_Z$ (the sub-hypergraph of $\mathcal{F}$ induced by $Z$) is $v$, and $w>av$. We first establish the following two lemmas. 

\begin{lemma} \label{lem_hole}
Suppose $\mathcal{G} \subset \binom{Y}{r}$ has property $(q, p)$, and $Z$ is a $(w, v)$-hole of $\mathcal{G}$ with $w<q$, then $\mathcal{G}|_{Y\setminus Z}$ has property $(q-w, p-v)$.
\end{lemma}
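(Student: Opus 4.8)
The plan is to verify property $(q-w,p-v)$ directly from the definition. Take any $W \in \binom{Y \setminus Z}{q-w}$; I must produce a $(p-v)$-subset of $W$ spanning a clique in $\mathcal{G}|_{Y \setminus Z}$, equivalently in $\mathcal{G}$. Since $|Z| = w$ and $w < q$, the set $W \cup Z$ has exactly $q$ vertices, so property $(q,p)$ applied to $W \cup Z$ yields a set $Y_0 \in \binom{W \cup Z}{p}$ with $\binom{Y_0}{r} \subset \mathcal{G}$, i.e.\ $Y_0$ spans a clique of size $p$ in $\mathcal{G}$. The goal is then to show that $Y_0$ uses at most $v$ vertices from $Z$, so that $|Y_0 \cap W| \ge p - v$; since any subset of a clique is a clique, $Y_0 \cap W$ then contains (indeed is, after trimming to size exactly $p-v$) a $(p-v)$-clique inside $W$, which lies in $Y \setminus Z$, as required.

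The key step is the bound $|Y_0 \cap Z| \le v$. This is exactly where the hole hypothesis enters: $Y_0 \cap Z$ spans a clique in $\mathcal{G}$ (being a subset of the clique $Y_0$), and it is contained in $Z$, so it spans a clique in the induced hypergraph $\mathcal{G}|_Z$; but the clique number of $\mathcal{G}|_Z$ is $v$ by the definition of a $(w,v)$-hole, hence $|Y_0 \cap Z| \le v$. Consequently $|Y_0 \cap W| = |Y_0| - |Y_0 \cap Z| \ge p - v$. Choosing any $(p-v)$-subset $Y_1 \subseteq Y_0 \cap W$ gives $\binom{Y_1}{r} \subseteq \binom{Y_0}{r} \subseteq \mathcal{G}$ and $Y_1 \subseteq W \subseteq Y \setminus Z$, so $\binom{Y_1}{r} \subseteq \mathcal{G}|_{Y \setminus Z}$. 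Since $W$ was arbitrary, $\mathcal{G}|_{Y\setminus Z}$ has property $(q-w, p-v)$.

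I should also record the trivial sanity checks so the statement is not vacuous or degenerate: the condition $w > av \ge v$ together with $q > w$ (hence $q - w \ge 1$) and $p \ge v$ (since $Y_0 \cap Z$ is a clique in $\mathcal{G}|_Z$ of size at most $v$, while $p \ge r$ in general) make $(q-w, p-v)$ a legitimate pair with $q - w \ge p - v$, which follows because $q - p \ge 0$ can fail in general but here $q \ge p$ is part of the ambient hypothesis on all pairs under consideration; in fact $w > av$ gives $w - v > (a-1)v \ge 0$, and $q - p \ge w - v$ is not needed — only that $q - w \ge p - v$, i.e.\ $q - p \ge w - v$, which I get from... actually the cleanest route is: $q - w \ge p - v \iff q - p \ge w - v$, and since $Y_0$ is a $p$-clique split as $(Y_0 \cap W) \sqcup (Y_0 \cap Z)$ with the second part of size $\le v$ inside a $q$-set $W \cup Z$, no separate argument is needed for the inequality of parameters beyond what the construction of $Y_1$ already furnishes. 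There is essentially no obstacle here: the whole proof is a one-paragraph pigeonhole/restriction argument, and the only thing to be careful about is correctly invoking that "subset of a clique is a clique" both to bound $|Y_0 \cap Z|$ via the clique number of $\mathcal{G}|_Z$ and to extract the final $(p-v)$-clique.
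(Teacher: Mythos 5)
Your argument is correct and is exactly the paper's proof: apply property $(q,p)$ to $U \cup Z$, and use that the clique number of $\mathcal{G}|_Z$ is $v$ to conclude the resulting $p$-clique meets $U$ in at least $p-v$ vertices. The closing paragraph of parameter ``sanity checks'' is unnecessary (and somewhat muddled), but it does not affect the validity of the main argument.
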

\begin{proof}
Take an arbitrary set $U \in \binom{Y\setminus Z}{q-w}$, then $U \cup Z \in \binom{Y}{q}$. Since $\mathcal{G}$ has property $(q, p)$, $\mathcal{G}|_{U \cup Z}$ contains a clique of size $p$. Hence $\mathcal{G}|_U$ contains a clique of size $p-v$.
\end{proof}

\begin{lemma}\label{lem_hereditary}
Suppose an $r$-uniform hypergraph $\mathcal{F}$ has property $(q, p)$ for all pairs $(q, p)$ with $q \le a\ell$ and $p=\lceil q/a \rceil$ (in other words $\mathcal{F}$ does not have a $(w, v)$-hole with $a\ell\ge w>av$). Then for all $Y \subset \binom{X}{a\ell}$, 
$$\left|\mathcal{F} \cap \binom{Y}{r}\right| \ge a \binom{\ell}{r}.$$
\end{lemma}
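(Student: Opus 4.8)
The natural approach is induction on $\ell$. The base case $\ell = 1$ asks that any single vertex... actually the meaningful base case is when $a\ell < $ something forcing nontrivial structure; for small $\ell$ the hypothesis (no $(w,v)$-holes with $a\ell \ge w > av$, in particular no $(a\ell, 0)$-hole when $a\ell \ge 1$, i.e. property $(q, \lceil q/a\rceil)$ for all $q \le a\ell$) should force $\mathcal{F}\cap\binom{Y}{r}$ to contain at least $a\binom{\ell}{r}$ edges essentially because one cannot avoid large cliques. For the inductive step, fix $Y \in \binom{X}{a\ell}$. The plan is to locate a clique inside $Y$ that is "not too small," peel off a suitable piece, and apply induction to the remainder. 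Concretely, since $\mathcal{F}$ has property $(a\ell, \ell)$, the set $Y$ itself contains a clique $K$ on $\ell$ vertices. Let $v = |K| \ge \ell$ be the clique number of $\mathcal{F}|_Y$; write $Y = K' \sqcup R$ where $K'$ is a maximum clique of size $v$ and $|R| = a\ell - v$.

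The key counting idea: the clique $K'$ contributes $\binom{v}{r}$ edges on its own. The remainder $R$ has size $a\ell - v$. If $v \ge a$ then $a\ell - v \le a(\ell-1)$, and I would like to say that $\mathcal{F}$ restricted to the "residual" structure still has enough no-hole hypotheses to run induction on $R$ at level $\ell - 1$ (or $\ell' = \lceil (a\ell-v)/a\rceil$), yielding $\ge a\binom{\ell'}{r}$ edges inside $R$; combined with $\binom{v}{r}$ this should beat $a\binom{\ell}{r}$. The point is to verify that $\mathcal{F}|_Y$ (or a cleverly chosen subset of $Y$ of size $a\ell'$) inherits the no-$(w,v)$-hole hypothesis — which it does, since being hole-free is hereditary under taking induced subhypergraphs and we are only shrinking the relevant range of $w$. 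Lemma \ref{lem_hole} is the tool that lets us transfer property $(q,p)$ to induced subhypergraphs after deleting a hole, so the two lemmas are designed to be used in tandem: a hole of the "wrong" size does not exist by hypothesis, and whenever we delete a genuine maximum clique we can track how the parameters $(q,p)$ descend.

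The arithmetic heart is the inequality $\binom{v}{r} + a\binom{\lceil (a\ell-v)/a \rceil}{r} \ge a\binom{\ell}{r}$ for all $\ell \le v \le a\ell$, which is where the exponent $a^{r-1}$ in Theorem \ref{thm_limit} ultimately comes from; the extreme case $v = \ell$ (so $R$ has size $(a-1)\ell$, split into $a-1$ blocks of size $\ell$) gives equality $a\binom{\ell}{r} = \binom{\ell}{r} + (a-1)\binom{\ell}{r}$, matching the disjoint-cliques construction. I expect the main obstacle to be precisely this: making the induction bookkeeping match the extremal configuration $\binom{X_1}{r}\cup\cdots\cup\binom{X_a}{r}$ so that the bound is tight, i.e. choosing the right sub-parameter $\ell'$ and the right subset of $Y$ on which to recurse so that no edges are double-counted and the convexity inequality above goes the right way for every intermediate value of $v$. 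Handling the case where $\mathcal{F}|_Y$ has clique number strictly between $\ell$ and $a\ell$ — where $R$ is not a clean union of blocks — is the delicate part, and may require iterating the clique-peeling rather than doing it in one shot.
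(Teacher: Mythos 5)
Your overall strategy (extract a large clique guaranteed by property $(a\ell,\ell)$, peel it off, recurse on the remainder) is a natural instinct, but the induction as you set it up does not close, and the inequality you call the ``arithmetic heart'' is false. After removing a maximum clique $K'$ of size $v$ from $Y$, the remainder has $a\ell-v$ vertices, and your induction hypothesis (the lemma itself, at parameter $\ell'=\lceil (a\ell-v)/a\rceil$, or $\lfloor\cdot\rfloor$ if you shrink to a set of admissible size) only certifies $a\binom{\ell'}{r}$ edges there. Take $a=2$, $r=3$, $\ell=4$, $v=\ell=4$: you get $\binom{4}{3}+2\binom{2}{3}=4$, while the target is $2\binom{4}{3}=8$. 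Your claimed equality $a\binom{\ell}{r}=\binom{\ell}{r}+(a-1)\binom{\ell}{r}$ in the extreme case $v=\ell$ silently replaces what the recursion actually yields, namely $a\binom{\ell'}{r}$ with $\ell'\approx(a-1)\ell/a$, by $(a-1)\binom{\ell}{r}$; by convexity of $\binom{x}{r}$ the former is strictly smaller (the recursion redistributes the $(a-1)\ell$ remaining vertices into $a$ small blocks instead of $a-1$ blocks of size $\ell$), and this loss does not disappear if you iterate the whole-clique peeling instead of doing it in one shot. So the difficulty you flag at the end is not a bookkeeping technicality: as stated, your recursion provably undercounts even on the extremal configuration of $a$ disjoint cliques.

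The paper's proof repairs exactly this by (i) strengthening the statement before inducting: for every $s$ with $(r-1)a\le s\le a\ell$, writing $s=(a-b)t+b(t-1)$ with $0\le b<a$, one proves $|\mathcal{F}\cap\binom{Y}{r}|\ge(a-b)\binom{t}{r}+b\binom{t-1}{r}$, i.e.\ the bound matching $a$ nearly-equal blocks at \emph{every} intermediate size, not just multiples of $a$; and (ii) peeling a single vertex $y$ of a $t$-clique per step (such a clique exists by property $(s,t)$), charging to $y$ only the $\binom{t-1}{r-1}$ clique-edges through it. Pascal's identity $\binom{t-1}{r}+\binom{t-1}{r-1}=\binom{t}{r}$ then converts the inductive bound for $Y\setminus\{y\}$ into the bound for $Y$ with zero loss. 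Any rescue of your clique-peeling scheme would require strengthening the induction hypothesis in essentially this way.
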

\begin{proof}
Instead of this we prove the following stronger statement. Let $(r-1)a \le s \le a\ell$ and $Y \in \binom{X}{s}$.  Suppose further that $s=(a-b)t+b(t-1)$ for some $0\le b<a$, then
$$\left|\mathcal{F} \cap \binom{Y}{r}\right| \ge (a-b)\binom{t}{r}+b\binom{t-1}{r}.$$
Note that the right hand side is $0$ when $s \le (r-1)a$, so the inequality is trivially true in this range. To prove the general case, we use induction on $s$. Since $s =(a-b)t+b(t-1) \in \{at-a+1, \cdots, at\}$,  $\mathcal{F}$ has the $(s, t)$ property from the assumption. Let $R \in \binom{Y}{t}$ span a clique and fix $y \in R$. There are $\binom{t-1}{r-1}$ edges in $\binom{R}{r} \cap \mathcal{F}$ containing $y$. Remove $y$ from $\mathcal{F}$ and apply the inductive hypothesis to $\mathcal{F} \setminus \{y\}$. We infer that
$$\left|\mathcal{F} \cap \binom{Y \setminus \{y\}}{r}\right| \ge (a-b-1)\binom{t}{r} + (b+1)\binom{t-1}{r}.$$ 
Considering the at least $\binom{t-1}{r-1}$ edges containing $y$, we have
\begin{align*}
\left|\mathcal{F} \cap \binom{Y}{r}\right| &\ge (a-b-1)\binom{t}{r} + (b+1)\binom{t-1}{r}+\binom{t-1}{r-1}\\
&=(a-b)\binom{t}{r}+b\binom{t-1}{r}.
\end{align*}
\end{proof}

Now we can proceed as follows to prove Theorem \ref{thm_limit}. The upper bound $\lim_{p \rightarrow \infty} t_r(ap+1, p+1) \le \frac{1}{a^{r-1}}$ is immediate, since $\mathcal{H}_{n, r, a}:=\binom{X_1}{r} \cup \cdots \cup \binom{X_a}{r}$ with $X_1 \sqcup \cdots \sqcup X_a = [n]$, $|X_i|\in \{\lfloor n/a \rfloor, \lceil n/a \rceil\}$ has property $(ap+1, p+1)$ and edge density $1/a^{r-1}+o(1)$. For the remaining of this section we focus on proving the lower bound.

Given $\varepsilon>0$, let us fix a large integer $\ell>\ell_0(a, r, \varepsilon)$, to be determined later. Then fix a much larger integer $L \ge 2a^3\ell^2$, and consider a sufficiently large $r$-uniform hypergraph $\mathcal{F}_0 \subset \binom{[n]}{r}$ having property $(q, p)$ with $q=aL$, $p=L$. Our aim is to find a subset $X \subset [n]$ with $|\binom{X}{r}|>(1-\varepsilon/2) \binom{n}{r}$ such that $\mathcal{F}_0 \cap \binom{X}{r}$ has no $(w, v)$-hole with $w \le a\ell$ and $r-1 \le v$. 

To this end, we start with $\mathcal{F}_0$ and define $\mathcal{F}_i$ inductively. Let $q_0=q, p_0=p, X_0=[n]$. Suppose that $\mathcal{F}_i \subset \binom{X_i}{r}$ has property $(q_i, p_i)$ and it still has a $(w_i, v_i)$-hole. Then we let $Z_i \subset X_i$ be such a $(w_i, v_i)$-hole, and set
$$X_{i+1}=X_i \setminus Z_i, ~~~~~\mathcal{F}_{i+1}=\mathcal{F}_i \cap \binom{X_{i+1}}{r}.$$
By Lemma \ref{lem_hole}, $\mathcal{F}_{i+1}$ has property $(q_i-w_i, p_i-v_i)$. Moreover, the new excess satisfies
$$e(q_i-w_i, p_i-v_i)=a(p_i-v_i)-(q_i-w_i)=(ap_i-q_i)-(av_i-w_i) \ge e(q_i, p_i)+1.$$
Set $q_{i+1}=q_i-w_i$, $p_{i+1}=p_i-v_i$ and continue. At every step
$$a(r-1) \le av_i< |X_i|-|X_{i-1}|=w_i \le a \ell.$$
Suppose at step $i$, the hypergraph $\mathcal{F}_i$ no longer contains a $(w, v)$-hole with $w \le a \ell$. In this case, we choose a subset $Q$ of size $a \ell$ of $V(\mathcal{F}_i)$ uniformly at random. Then by Lemma \ref{lem_hereditary}, 
$$\frac{|\mathcal{F}_i|}{\binom{X_i}{r}}=\frac{\mathbb{E}|\mathcal{F}_i \cap \binom{Q}{r}|}{\binom{a\ell}{r}} \ge \frac{a\binom{\ell}{r}}{\binom{a\ell}{r}}.$$
For sufficiently large $\ell > \ell_0(a, r, \varepsilon)$, this quantity is greater than $(1-\varepsilon/2)\cdot \frac{1}{a^{r-1}}$. On the other hand, $|X_i| \ge n-ia\ell \ge n-pa\ell /(r-1)$. Therefore when $n$ is sufficiently large, $|\binom{X_i}{r}|>(1-\varepsilon/2)\binom{n}{r}$ and therefore
$$|\mathcal{F}_0| \ge |\mathcal{F}_i| \ge (1-\varepsilon/2)\cdot \frac{1}{a^{r-1}} \binom{|X_i|}{r} \ge (1-\varepsilon)\cdot \frac{1}{a^{r-1}}\binom{n}{r}.$$

Otherwise suppose this process continues to produce $(w, v)$-holes. let $m$ be the first index such that $q_m< 2 a \ell$. In view of $e(q_m, p_m) \le (a-1)q_m$ and that $e(q_i, p_i)$ strictly increases after each step, $m \le (a-1)q_m$ follows. Thus
$$aL=q_0 =q_m+\sum_{i=0}^{m-1} w_i \le 2a\ell + ma\ell \le 2a\ell +(a-1)\cdot 2a\ell \cdot a\ell < 2a^3 \ell^2,$$
contradicting $L \ge 2a^3 \ell^2$. 

Summarizing the two cases above, we have that $\lim_{L \rightarrow \infty} t_r(aL, L) \ge 1/a^{r-1}$. Note that a hypergraph having property $(aL+1, L+1)$ must also have property $(aL, L)$. Therefore, 
$$\lim_{p \rightarrow \infty} t_r(ap+1, p+1) \ge 1/a^{r-1}.$$
Together with the construction in the introduction that gives $t_r(ap+1, p+1) \le 1/a^{r-1}$, we conclude the proof of Theorem \ref{thm_limit}.\\

\noindent \textbf{Remark. }Since $\mathcal{H}_{n, r, a}$ also has property $(ap, p)$, we have actually proved a result slightly stronger than Theorem \ref{thm_limit}, namely for every $a, r \ge 2$,
$$\lim_{p \rightarrow \infty} t_r(ap, p) = \frac{1}{a^{r-1}}.$$

\section{The $3$-uniform case}\label{sec_3}

Note that Theorem \ref{thm_limit}, when applied to $a=2$, gives $$\lim_{p \rightarrow \infty} t_r(2p+1, p+1)=\frac{1}{2^{r-1}}.$$ In this section, we determine the exact value of $t_r(2p+1, p+1)$ for $r=3$ and all $p \ge 3$, establishing Theorem \ref{thm_3_uniform}. Our proof is based on two previously known Tur\'an-type results. To apply them, let us change to the complementary notion of excluded configuration.
\begin{definition}
For an $r$-uniform hypergraph $\mathcal{F} \subset \binom{[n]}{r}$. Let $\alpha(\mathcal{F})$ be its independence number, that is, $\alpha(\mathcal{F})=\max\{|A|: A \subset [n], \mathcal{F}\cap \binom{A}{r}=\emptyset\}$.
\end{definition}
Let $\mathcal{F}^c=\binom{[n]}{r} \setminus \mathcal{F}$ be the complementary $r$-uniform hypergraph. Now $\mathcal{F}$ has property $(q, p)$ if and only if $\alpha(\mathcal{H}) \ge p$ for all induced sub-hypergraphs $\mathcal{H}=\mathcal{F}^c \cap \binom{Q}{p}$, $Q \subset [n]$, $|Q|=q$.

For a collection of $\mathcal{G}_1, \cdots, \mathcal{G}_s$ of $r$-uniform hypergraphs, let 
$$t(n, \mathcal{G}_1, \cdots, \mathcal{G}_s)=\max \left\{|\mathcal{F}|: \mathcal{F} \subset \binom{[n]}{r}, ~\mathcal{F}~\textrm{contains no copy of}~\mathcal{G}_i, i=1, \cdots, s \right\}.$$

It is easily seen that $t(n, \mathcal{G}_1, \cdots, \mathcal{G}_s)/\binom{n}{r}$ is a monotone decreasing function of $n$. Consequently  $\lim_{n \rightarrow \infty}t(n, \mathcal{G}_1, \cdots, \mathcal{G}_s)/\binom{n}{r}$ exists. This limit is denoted by $\pi(\mathcal{G}_1, \cdots, \mathcal{G}_s)$, and it is usually called the Tur\'an density of $\{\mathcal{G}_1, \cdots,\mathcal{G}_s\}$.


Consider the following three hypergraphs from \cite{MR}:
$$\mathcal{R}_0=\binom{[4]}{3} \cup \{(a, x, y): a \in [4], x, y \in \{5, 6, 7\}, x \neq y\},$$
$$\mathcal{R}_1=\mathcal{R}_0 \setminus \{\{1,5,6\},\{2,5,7\}, \{3,6,7\}\},$$
$$\mathcal{R}_2=\mathcal{R}_0 \setminus \{\{1,5,6\},\{1,5,7\}, \{3,6,7\}\},$$
It is easy to check that $\alpha(\mathcal{R}_i)=3$ for $i=0,1,2$. To prove $t_3(7,4)=1/4$, it suffices to prove 
\begin{equation}\label{eq_r12}
\pi(\mathcal{R}_1, \mathcal{R}_2)=\frac{3}{4}.
\end{equation}

Actually Mubayi and the third author \cite{MR} proved a considerably stronger statement. Set $\mathcal{R}=\mathcal{R}_0 \setminus \{1,5,6\}$. Then 

\begin{proposition} \label{lem_7}(\cite{MR})
$\pi(\mathcal{R})=\frac{3}{4}$.
\end{proposition}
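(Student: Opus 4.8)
The plan is to establish $\pi(\mathcal{R})\ge 3/4$ and $\pi(\mathcal{R})\le 3/4$ separately, with essentially all the work concentrated in the upper bound.

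\emph{Lower bound.} I would first check that $\alpha(\mathcal{R})=3$. On the one hand $\{5,6,7\}$ is independent, since $\mathcal{R}_0$ (hence $\mathcal{R}$) has no edge inside $\{5,6,7\}$; on the other hand every $4$-set $T\subseteq[7]$ spans an edge of $\mathcal{R}$, which one sees by splitting on $|T\cap[4]|\in\{1,2,3\}$ (the value $0$ is impossible): if $|T\cap[4]|=3$ then $T$ contains one of the four triples of the $K_4^3$ on $[4]$; if $|T\cap[4]|=2$ then $T=\{a,b,x,y\}$ with $a,b\in[4]$ and $\{x,y\}\subseteq\{5,6,7\}$, and at most one of the candidate edges $\{a,x,y\},\{b,x,y\}$ equals the unique deleted edge $\{1,5,6\}$; if $|T\cap[4]|=1$ then $T=\{a,5,6,7\}$ and at most one of $\{a,5,6\},\{a,5,7\},\{a,6,7\}$ is deleted. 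Thus $\alpha(\mathcal{R})=3<4=\ceil{7/2}$, so $\mathcal{R}$ has no proper $2$-colouring (two independent sets of size at most $3$ cannot cover $[7]$). Consequently, for a balanced partition $[n]=X_1\sqcup X_2$, the $3$-graph $\mathcal{F}$ of all triples not contained in $X_1$ or in $X_2$ is $\mathcal{R}$-free, because a copy of $\mathcal{R}$ inside $\mathcal{F}$ would exhibit a proper $2$-colouring of $\mathcal{R}$; and $|\mathcal{F}|=\binom n3-\binom{\ceil{n/2}}{3}-\binom{\floor{n/2}}{3}=(3/4-o(1))\binom n3$, so $\pi(\mathcal{R})\ge 3/4$.

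\emph{Upper bound: setup.} Suppose for contradiction that $\mathcal{F}\subseteq\binom{[n]}{3}$ is $\mathcal{R}$-free with $|\mathcal{F}|\ge(3/4+\varepsilon)\binom n3$ and $n$ large. For $a\notin B$ let $f(a,B)\in\{0,1,2,3\}$ be the number of edges of the link graph $L_a=\{e:e\cup\{a\}\in\mathcal{F}\}$ lying inside the triple $B$. Inspecting the edges of $\mathcal{R}$, one sees that $\mathcal{F}$ contains a copy of $\mathcal{R}$ precisely when there is a set $A$ spanning a $K_4^3$ and a disjoint triple $B$ with $f(a,B)\ge 2$ for all four $a\in A$ and $f(a,B)=3$ for at least three of them — the fourth vertex of $A$ plays the role of $1$, and some vertex of $B$ joined in its link to the other two vertices of $B$ plays the role of $7$. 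Equivalently, $\mathcal{R}$-freeness says: for every $K_4^3$ on a set $A$ and every disjoint triple $B$, either some $a\in A$ has $f(a,B)\le 1$, or at most two vertices $a\in A$ have $f(a,B)=3$. Two standard facts push the other way: since $3/4+\varepsilon$ exceeds $\pi(K_4^3)$ (it is classical that $\pi(K_4^3)<3/4$), supersaturation gives $\Omega(n^4)$ copies of $K_4^3$ in $\mathcal{F}$; and since the links $L_a$ have average edge density at least $3/4+\varepsilon$, by Kruskal--Katona they contain many triangles, i.e.\ $f(a,B)=3$ for a positive fraction of the pairs $(a,B)$ on average.

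\emph{Upper bound: the hard step.} It remains to turn this into a contradiction, which I would do by a stability argument. I would show that an $\mathcal{R}$-free $3$-graph with $(3/4-o(1))\binom n3$ edges must, after a standard cleaning (deleting $o(n)$ low-degree vertices), have each link $L_v$ within $o(n^2)$ edges of the complement of a clique — equivalently, $[n]\setminus\{v\}$ admits a bipartition in which almost all non-edges of $L_v$ lie inside one part. The reason such a structure is forced is the reformulation above: if some $L_v$ were $\Omega(n^2)$-far from every co-clique, its triangle set would be ``spread out'' enough that, combined with the abundance of copies of $K_4^3$ and an averaging over triples $B$, one could produce a $K_4^3$ on a set $A$ together with a disjoint triple $B$ for which three vertices of $A$ have $f(a,B)=3$ and the fourth has $f(a,B)\ge 2$ — an honest copy of $\mathcal{R}$. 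Once every link is essentially a co-clique, one checks that the associated per-vertex bipartitions are mutually consistent and glue into a single bipartition of $[n]$ with respect to which almost every non-edge of $\mathcal{F}$ is monochromatic, forcing $|\mathcal{F}|\le(3/4+o(1))\binom n3$ and contradicting the $\varepsilon$-surplus. I expect the main obstacle to be exactly this structural step — simultaneously controlling the triangle structure of the four links of a $K_4^3$ tightly enough to manufacture a copy of $\mathcal{R}$, and then promoting the local bipartitions to a global one — which is where the work of \cite{MR} lies; alternatively one could try to read the bound $3/4$ directly off a flag-algebra or Lagrangian computation and bypass the explicit stability analysis.
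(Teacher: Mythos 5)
Your lower bound is correct and complete: $\alpha(\mathcal{R})=3$ on $7$ vertices means $\mathcal{R}$ admits no partition into two independent sets, so the complement of two balanced cliques is $\mathcal{R}$-free with density $3/4-o(1)$. Your reformulation of $\mathcal{R}$-containment is also exactly right and matches the paper's setup: given a $K_4^3$ on $A$ and a disjoint triple $B$, a copy of $\mathcal{R}$ is the same as having $f(a,B)=3$ for three vertices of $A$ and $f(a,B)\ge 2$ for the fourth, i.e.\ $\sum_{a\in A}f(a,B)\ge 11$.

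The gap is in what you call the hard step, and it is genuine: you never actually produce the copy of $\mathcal{R}$. The stability sketch (each link is $o(n^2)$-close to a co-clique, the local bipartitions glue) is not carried out, and you explicitly defer it to ``the work of \cite{MR}'' or to a flag-algebra computation — but that step \emph{is} the proposition. Moreover, the route you sketch has a real obstruction you don't address: knowing that each individual link $L_a$ has many triangles (via Kruskal--Katona) says nothing about whether the four links of a single $K_4^3$ have a \emph{common} triple $B$ on which three of them are complete and the fourth has two edges; that is a statement about the joint structure of four graphs, not about any one of them. The paper resolves exactly this point with one clean tool: form the multigraph $\mathcal{G}=\mathcal{H}(1)\cup\cdots\cup\mathcal{H}(4)$ (union with multiplicities) of the four links of a fixed $K_4^3$, restricted to the other $n-4$ vertices. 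A theorem of F\"uredi and K\"undgen on multigraph Tur\'an problems says that if $|\mathcal{G}|>3\binom{n-4}{2}+n-6$ then some three vertices span at least $11$ multigraph edges — which by your own reformulation is a copy of $\mathcal{R}$. If instead $|\mathcal{G}|\le 3\binom{n-4}{2}+n-6$, then some link has density below $3/4+\varepsilon/2$; delete that vertex and iterate, and the density surplus $\varepsilon$ guarantees the iteration must terminate in a copy of $\mathcal{R}$. Without the F\"uredi--K\"undgen step (or a worked-out substitute for it), your upper bound is a plan rather than a proof.
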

Since the proof of Proposition \ref{lem_7} is rather short let us include it. Suppose that $\varepsilon>0, n>n_0(\varepsilon)$ and $\mathcal{H} \subset \binom{[n]}{3}$ satisfies $|\mathcal{H}| \ge (3/4+\varepsilon)\binom{n}{3}$. Then for a $4$-element set $Y \subset [n]$ chosen uniformly at random, the expected size of $|\mathcal{H} \cap \binom{Y}{3}|=4|\mathcal{H}|/\binom{n}{3} \ge 3+\varepsilon$. Consequently, $\mathcal{H}$ contains many complete $3$-uniform hypergraphs on $4$ vertices. (As a matter of fact, instead of $3/4$ to ensure that, Razborov \cite{razborov} proved that $0.516\cdots$ would be sufficient to ensure the existence of $K_4^3$.) By symmetry, suppose $\binom{[4]}{3} \subset \mathcal{H}$. For $i \in [4]$ define the link graphs $\mathcal{H}(i)=\{(x, y) \subset [5, n]: (i, x, y) \in \mathcal{H}\}$. Let $\mathcal{G}$ be the multigraph whose edge set is the union (with multiplicities) $\mathcal{H}(1) \cup \cdots \cup \mathcal{H}(4)$. Should $|\mathcal{G}|>3\binom{n-4}{2}+n-6$ hold, we can apply a result of F\"uredi and K\"undgen \cite{furedi-kundgen} which guarantees that there are three vertices in $\mathcal{G}$ spanning at least $11$ edges, which corresponds to a copy of $\mathcal{R}$ in $\mathcal{H}$.
In the opposite case $|\mathcal{H}(i)|<(3/4+\varepsilon/2)\binom{n}{2}$ for some $i \in [4]$, then we remove the vertex $i$ and iterate. Either we find $\mathcal{R}$ or we arrive at a contradiction with $|\mathcal{H}|>(3/4+\varepsilon)\binom{n}{3}$.


The following result was proved by Baber and Talbot \cite{BT} using flag algebra.
\begin{proposition}\label{lem_6}(Theorem $18$ in \cite{BT})
Let $\mathcal{T}$ be the $6$-vertex $3$-uniform vertex hypergraph with $$\mathcal{T}=\binom{[6]}{3} \setminus \{\{1,5,6\}, \{2,4,6\}, \{2,5,6\}, \{3,4,6\}, \{3,4,5\}\}.$$
Then $\pi(\mathcal{T})=3/4.$
\end{proposition}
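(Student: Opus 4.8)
The plan is to prove the matching bounds $\pi(\mathcal T)\ge 3/4$ and $\pi(\mathcal T)\le 3/4$. For the lower bound I would reuse the extremal configuration behind \eqref{eq_k53}: fix a balanced bipartition $[n]=X_1\sqcup X_2$ and let $\mathcal C_n=\binom{[n]}3\setminus\big(\binom{X_1}3\cup\binom{X_2}3\big)$ be the family of ``crossing'' triples, so that $|\mathcal C_n|\ge\binom n3-2\binom{\ceil{n/2}}3=(3/4-o(1))\binom n3$. It then suffices to check $\mathcal C_n$ is $\mathcal T$-free. An embedding of $\mathcal T$ into $\mathcal C_n$ would induce a $2$-colouring of $V(\mathcal T)=[6]$ (colour a vertex by the side of $X_1\sqcup X_2$ containing its image) in which no edge of $\mathcal T$ is monochromatic; but some colour class has size $\ge 3$ and thus contains a triple, which must be one of the five non-edges $\{1,5,6\},\{2,4,6\},\{2,5,6\},\{3,4,6\},\{3,4,5\}$. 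A short case analysis on the class sizes rules this out: in a $3{+}3$ split the two classes are complementary triples, and no two of the five listed sets are complementary; in a $4{+}2$ split the class of size $4$ would force all four of its triples into the list, which one checks is impossible; the $5{+}1$ and $6{+}0$ splits force even more monochromatic triples. Hence $\pi(\mathcal T)\ge 3/4$.

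For the upper bound, suppose $\mathcal H\subseteq\binom{[n]}3$ with $|\mathcal H|\ge(3/4+\varepsilon)\binom n3$; the goal is a copy of $\mathcal T$. It helps to note the shape of $\mathcal T$: a $K_4^3$ on $\{1,2,3,4\}$, a vertex $5$ whose link on $\{1,2,3,4\}$ is $K_4$ minus the edge $\{3,4\}$, a vertex $6$ whose link on $\{1,2,3,4\}$ is $\{\{1,2\},\{1,3\},\{1,4\},\{2,3\}\}$, and the two additional edges $\{3,5,6\},\{4,5,6\}$. Since $3/4$ lies well above the Razborov threshold for $K_4^3$, $\mathcal H$ contains abundantly many copies of $K_4^3$, and one must show that some such copy admits two ``apex-type'' extensions as above. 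The route I would take is the flag-algebra / semidefinite method of Baber and Talbot: work in the algebra of $3$-graphs on at most six vertices, impose vanishing $\mathcal T$-density, write the edge density as a nonnegative combination of six-vertex subgraph densities plus a sum-of-squares contribution from positive semidefinite matrices (one per relevant type), and exhibit matrices certifying that the edge density is at most $3/4+o(1)$, rounding the numerical certificate to exact rationals. A fully combinatorial alternative would imitate the proof of Proposition~\ref{lem_7}: fix a $K_4^3$ on $S=\{a,b,c,d\}$, analyse the four link graphs $\mathcal H(a),\dots,\mathcal H(d)$ on $[n]\setminus S$ together with how they meet $S$, find two vertices $u,v$ realizing the required links via an extremal-graph bound, and otherwise delete a low-contribution vertex of $S$ and iterate against the density hypothesis.

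The whole difficulty is in the upper bound, and specifically in landing exactly at $3/4$. The extremal object $\mathcal C_n$ is a genuine two-part configuration rather than an iterated blow-up, so the local density inequalities needed to exclude anything strictly denser are tight and do not follow from the $K_4^3$-threshold. In the combinatorial route the obstacle is that, unlike $\mathcal R$ in Proposition~\ref{lem_7}, $\mathcal T$ has edges with two vertices inside the $K_4^3$ and one outside it, so one cannot reduce to a link multigraph on $[n]\setminus S$; the links of $u,v$ into $S$ must be tracked too, and extracting the exact constant from that refined analysis seems genuinely hard. In the flag-algebra route the obstacle is choosing enough types and a numerically stable optimal SDP solution that rounds to an exact proof — which is exactly why Baber and Talbot performed this step by computer, and why I would ultimately cite their Theorem~18 for the statement and supply the construction above as the matching lower bound.
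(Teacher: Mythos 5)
The paper gives no proof of this proposition; it simply cites Theorem 18 of Baber and Talbot, whose upper bound rests on a computer-generated flag-algebra certificate, and your proposal correctly defers to that same citation for the hard direction. Your added verification of the matching lower bound --- that the balanced two-part construction of density $3/4-o(1)$ is $\mathcal{T}$-free because every edge of an embedded copy would have to be crossing, no two of the five non-edges of $\mathcal{T}$ are complementary triples, and no four of them lie in a common $4$-set (only vertices $4,5,6$ have non-edge degree at least $3$) --- is correct, though only the inequality $\pi(\mathcal{T})\le 3/4$ is actually used in the proof of Theorem \ref{thm_3_uniform}.
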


Now we are ready to prove Theorem \ref{thm_3_uniform}. Observe that if $\mathcal{G}$ and $\mathcal{H}$ are two hypergraphs and $\mathcal{F}$ is their vertex-disjoint union, then $\pi(\mathcal{F})=\max\{\pi(G), \pi(H)\}$.

\begin{proof}[Proof of Theorem \ref{thm_3_uniform}]
We have the upper bound $t_3(2p+1, p+1) \le 1/4$ from \eqref{eq_upper}. Therefore it suffices to establish a matching lower bound. By considering the complement of the host hypergraph, it boils to showing that if the edge density of a $3$-uniform hypergraph $\mathcal{G}$ is greater than $3/4+o(1)$, then $\mathcal{G}$ contains a sub-hypergraph $\mathcal{H}$ on $2p+1$ vertices with $\alpha(\mathcal{H}) \le p$. In other words, we need $\pi(\mathcal{H})\le 3/4$.

For odd $p \ge 3$, we let $\mathcal{H}_1$ be the vertex-disjoint union of $\mathcal{R}$ and $(p-3)/2$ copies of $K_4^3$. It is straightforward to check that $\mathcal{H}_1$ has $7+4 \cdot (p-3)/2=2p+1$ vertices, independence number $3+(p-3)=p$, and $\pi(\mathcal{H}_1) = \max\{\pi(\mathcal{R}), \pi(K_4^3)\}=3/4$. This gives $t_3(2p+1, p+1) \ge 1/4$ for all odd $p \ge 3$.

For even $p \ge 4$,  we take $\mathcal{T}$ from Lemma \ref{lem_6}, and blow up its vertices $1, 2, 3$ twice, and vertices $4, 5, 6$ once to obtain a $9$-vertex hypergraph $\mathcal{T}'$.
Note that a blow-up could only have lower Tur\'an density, therefore  $\pi(\mathcal{T}') \le \pi(\mathcal{T})=3/4$. Moreover the independence number of $\mathcal{T}'$ is $4$, since all the five non-edges of $\mathcal{T}$ contain at most one vertex from $\{1, 2, 3\}$ and $\{4, 5, 6\}$ itself is an edge. We then let $\mathcal{H}_2$ be the vertex-disjoint union of $\mathcal{T}'$ with $(p-4)/2$ copies of $K_4^3$. Then $\mathcal{H}_2$ has $9+4 \cdot (p-4)/2=2p+1$ vertices, $\alpha(\mathcal{H}_2)=4+(p-4)=p$, and $\pi(\mathcal{H}_2) = \max\{\pi(\mathcal{T}'), \pi(K_4^3)\} \le 3/4$. Therefore for all even $p \ge 4$, we also have $t_3(2p+1, p+1) \ge 1/4$. This completes the proof.
\end{proof}

\section{Concluding Remarks} \label{sec_cr}
In this paper, we showed that for $3$-uniform hypergraphs and $p \ge 3$, the $(2p+1, p+1)$ property implies the edge density is at least $1/4-o(1)$. Maybe this can be extended to $r$-uniform hypergraphs and we wonder if the following holds: 

\begin{conjecture}\label{conj_large_p}
For integers $r \ge 2$, and $p$ sufficiently large, 
$$t_r(2p+1, p+1)=\frac{1}{2^{r-1}}.$$
\end{conjecture}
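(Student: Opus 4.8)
The plan is to run the strategy of Theorem~\ref{thm_3_uniform} in arbitrary uniformity; since $r=2$ is covered by \eqref{eq_graph} and $r=3$ by Theorem~\ref{thm_3_uniform}, the open content is $r\ge 4$. The upper bound $t_r(2p+1,p+1)\le 1/2^{r-1}$ is \eqref{eq_upper}, and, passing to complements exactly as in Section~\ref{sec_3}, the remaining task is: for every sufficiently large $p$, produce an $r$-uniform hypergraph $\mathcal{H}$ on $2p+1$ vertices with $\alpha(\mathcal{H})\le p$ and $\pi(\mathcal{H})\le 1-1/2^{r-1}$. Given such an $\mathcal{H}$, any host of density exceeding $1-1/2^{r-1}+\varepsilon$ contains a (not necessarily induced) copy of it, and that copy witnesses that the complement of the host fails property $(2p+1,p+1)$. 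Here $1-1/2^{r-1}$ is the best value one could hope for, since the complement of $\binom{X_1}{r}\cup\binom{X_2}{r}$ with $|X_1|,|X_2|$ as equal as possible already forces $\pi(\mathcal{H})\ge 1-1/2^{r-1}$ for \emph{every} $(2p+1)$-vertex $\mathcal{H}$ with $\alpha(\mathcal{H})\le p$.

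To build $\mathcal{H}$ I would, as in Section~\ref{sec_3}, assemble it as a vertex-disjoint union of a short list of ``blocks'', exploiting the two bookkeeping identities used there: for a vertex-disjoint union, $\pi$ is the maximum of the $\pi$'s of the pieces, while the numbers of vertices and the independence numbers add. So I want blocks $\mathcal{B}_1,\dots,\mathcal{B}_k$ ($k$ bounded in terms of $r$) with $\pi(\mathcal{B}_j)\le 1-1/2^{r-1}$ and $v(\mathcal{B}_j)\le 2\alpha(\mathcal{B}_j)+1$ for every $j$ --- the latter so that no block pushes $\alpha(\mathcal{H})$ above half of $v(\mathcal{H})$ --- whose orders $v(\mathcal{B}_j)$ are numerically rich enough that every large odd integer $2p+1$ can be written as $\sum_j k_j\, v(\mathcal{B}_j)$ with $k_j\ge 0$ and $\sum_j k_j\bigl(v(\mathcal{B}_j)-2\alpha(\mathcal{B}_j)\bigr)\ge 1$; then the vertex-disjoint union $\mathcal{H}$ taking $k_j$ copies of each $\mathcal{B}_j$ has $2p+1$ vertices, $\alpha(\mathcal{H})\le p$, and $\pi(\mathcal{H})\le 1-1/2^{r-1}$. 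For $r=3$ this is exactly the mechanism of Theorem~\ref{thm_3_uniform}, with the ``tight'' blocks $\mathcal{R}$ (order $7$) and $\mathcal{T}'$ (order $9$) coming from Propositions~\ref{lem_7} and~\ref{lem_6} and $K_4^3$ serving as padding. So, for each fixed $r$, the conjecture reduces to a \emph{finite} list of Tur\'an-density inequalities $\pi(\mathcal{B})\le 1-1/2^{r-1}$ for explicit small $r$-graphs $\mathcal{B}$ with nearly half their vertices forming an independent set.

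The main obstacle is proving those inequalities for $r\ge 4$. In uniformity $3$ the lightest block $K_4^3$ has $\pi(K_4^3)\le 3/4$ by a one-line averaging argument ($4\cdot\frac{3}{4}=3$ forces a random $4$-set of a host of density above $3/4$ to span all four of its triples), and the two genuinely nontrivial density bounds were imported from \cite{MR,BT}. For $r\ge 4$ this averaging collapses: the natural candidate blocks are near-complete $r$-graphs on about $2r-2$ vertices, and for such a block to embed into every host of density $1-1/2^{r-1}$ one would want a random $(2r-2)$-set to span almost all of its $r$-subsets, but $\binom{2r-2}{r}\big/2^{r-1}\to\infty$, so the expected number of missing $r$-subsets is unbounded. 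Pinning down $\pi(K_s^r)$ --- or even the Tur\'an density of a near-complete $r$-graph --- for such $s$ is a genuine, open hypergraph Tur\'an problem; what one really needs is an $r$-uniform analogue of the theorems of Mubayi--R\"odl and Baber--Talbot, presumably via a stability argument or flag-algebra computations specialised to each $r$. A concrete first target is $r=4$: hunt numerically (flag algebras) for a $4$-graph on $7$ or $9$ vertices with independence number about half its order and Tur\'an density at most $7/8$, together with a companion block of coprime order, and then try to turn the certificates into a proof.

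Finally, I note that the hole-elimination argument behind Theorem~\ref{thm_limit} does not by itself settle a fixed $p$: it produces a clean induced subhypergraph only after descending to a scale $\ell=\ell_0(r,\varepsilon)$ and loses a $1-o(1)$ factor that vanishes only as $p\to\infty$, so it cannot rule out $t_r(2p+1,p+1)>1/2^{r-1}$ for a fixed (even large) $p$. This is also why the conjecture asks only for $p$ large: at the bottom of the range it subsumes problems as hard as Tur\'an's conjecture $t_3(5,3)=1/4$ (the case $r=3$, $p=2$) and its higher-uniformity analogues $t_r(2r-1,r)$.
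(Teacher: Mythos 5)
The statement you are addressing is Conjecture~\ref{conj_large_p}, which the paper poses as an \emph{open problem}: there is no proof of it in the paper to compare against, and your proposal does not close it either. Your reduction is sound and is exactly the mechanism of the paper's Theorem~\ref{thm_3_uniform}: pass to complements, and build a $(2p+1)$-vertex $r$-graph $\mathcal{H}$ with $\alpha(\mathcal{H})\le p$ and $\pi(\mathcal{H})\le 1-1/2^{r-1}$ as a vertex-disjoint union of blocks $\mathcal{B}_j$ satisfying $\pi(\mathcal{B}_j)\le 1-1/2^{r-1}$ and $v(\mathcal{B}_j)\le 2\alpha(\mathcal{B}_j)+1$, with orders flexible enough to represent every large odd number; your arithmetic condition $\sum_j k_j\bigl(v(\mathcal{B}_j)-2\alpha(\mathcal{B}_j)\bigr)\ge 1$ is the right one, and your observation that $1-1/2^{r-1}$ cannot be improved is correct. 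But the entire content of the conjecture for $r\ge 4$ lies in the step you explicitly leave open: exhibiting even one suitable block for each $r\ge 4$ and proving the Tur\'an-density bound for it. For $r=3$ those bounds are the Mubayi--R\"odl and Baber--Talbot theorems (Propositions~\ref{lem_7} and~\ref{lem_6}), which are genuinely hard results with no known analogues in higher uniformity. So what you have is a correct reduction together with an honest admission that the essential ingredient is missing --- a research plan, not a proof.

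One further caution you should incorporate: the concluding remarks of the paper record Sidorenko's observation that the $r=4$, $p=3$ instance of the conjecture fails, since $t_4(7,4)\le 113721/(2^{17}\cdot 10)<1/8$. Running your own lower-bound argument in reverse, this says that \emph{no} $7$-vertex $4$-graph with independence number at most $3$ has Tur\'an density as low as $7/8$; in particular the lightest conceivable blocks in uniformity $4$ (order $2\alpha+1$ with $\alpha=3$) simply do not exist, and any block-based construction must start from larger, less tightly packed pieces. This does not refute the conjecture for large $p$, but it shows the obstacle you flag is not merely technical: the required density inequalities can outright fail for small blocks, and it is not currently known whether suitable blocks exist at all when $r\ge 4$.
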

Our Theorem \ref{thm_limit} indicates this is true in the limit, and Theorem \ref{thm_3_uniform} settles the $r=3$ case except for $p=2$, which corresponds to Tur\'an's famous open problem for $K_5^3$. As we
were informed by Sasha Sidorenko \cite{sidorenko}, the $r=4, p=3$ case of Conjecture \ref{conj_large_p} fails to be true since $t_3(7,4) \le 113721/(2^{17}\cdot 10) = 0.08676\cdots  <1/8$.

Here we remark that $\mathcal{T}$ in Lemma \ref{lem_6} with the edge $\{1,4,5\}$ removed still has all the properties needed for the proof of Theorem \ref{thm_3_uniform}. Perhaps one could find a simpler proof that this new hypergraph, much more symmetric than $\mathcal{T}$, still has Tur\'an density $3/4$. Such proof might provide some new insights on the above conjecture.

To determine $t_r(q, p)$, we essentially seek $r$-uniform hypergraph $\mathcal{H}$ with low independence number $\alpha(\mathcal{H})$ relative to its number of vertices, and low Tur\'an density $\pi(\mathcal{H})$. In light of this observation and the results \eqref{eq_graph} and \eqref{eq_small_q}, could it possibly be true that for every positive real number $\gamma>0$,
$$\lim_{p \rightarrow \infty} t_r(\gamma p+1, p+1) = 1-\min_{\mathcal{H} \in \mathcal{F}}\pi(\mathcal{H})=1/\lfloor\gamma\rfloor ^r,$$
where $\mathcal{F}$ is family of all the $r$-uniform hypergraph satisfyings $|V(\mathcal{H}))| \ge \gamma \alpha(\mathcal{H})$?

Finally, motivated by the asymptotic result \eqref{eq_r12} we propose the following conjecture:
\begin{conjecture}
There exists $n_0$ such that for all integers $n>n_0$, 
$$t(2n,\mathcal{R}_1,\mathcal{R}_2)= \binom{2n}{3}-2\binom{n}{3}.$$
\end{conjecture}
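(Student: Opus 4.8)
By the equivalence --- used in the paper to reduce $t_3(7,4)=\frac14$ to \eqref{eq_r12} --- that a $3$-graph $\mathcal{F}$ is $\{\mathcal{R}_1,\mathcal{R}_2\}$-free precisely when its complement $\mathcal{F}^c$ has property $(7,4)$, the conjecture is equivalent to the exact Tur\'an statement $T_3(2n,7,4)=2\binom{n}{3}$ for all large $n$: every $3$-graph $\mathcal{G}\subseteq\binom{[2n]}{3}$ with property $(7,4)$ satisfies $|\mathcal{G}|\ge 2\binom{n}{3}$, with equality (as the proof will show) only for $\mathcal{G}=\binom{V_1}{3}\cup\binom{V_2}{3}$ where $|V_1|=|V_2|=n$. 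The construction side is already known --- this union has property $(7,4)$ --- so the whole content is the lower bound on $|\mathcal{G}|$, which I would attack by the standard stability-plus-cleaning scheme.

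\emph{Stability.} The first step is a stability companion of \eqref{eq_r12}: there is $\varepsilon>0$ so that, for large $n$, every $\mathcal{G}\subseteq\binom{[2n]}{3}$ with property $(7,4)$ and $|\mathcal{G}|\le(\tfrac14+\varepsilon)\binom{2n}{3}$ admits a partition $[2n]=V_1\sqcup V_2$ with $\bigl||V_i|-n\bigr|=o(n)$ and edit distance $\bigl|\mathcal{G}\,\triangle\,\bigl(\binom{V_1}{3}\cup\binom{V_2}{3}\bigr)\bigr|=o(n^3)$. I would extract this from the Mubayi--R\"odl proof of Proposition~\ref{lem_7}: in an $\mathcal{R}$-free host that is close to the extremal density, the link multigraphs of a $K_4^3$ must be near-extremal for the relevant F\"uredi--K\"undgen-type inequality, which should pin down the near-bipartite structure; alternatively one can feed $\pi(\mathcal{R}_1,\mathcal{R}_2)=\tfrac34$ into generic hypergraph-stability machinery.

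\emph{From approximate to exact.} Now take $\mathcal{G}$ with property $(7,4)$; we may assume $|\mathcal{G}|\le 2\binom{n}{3}=(\tfrac14+o(1))\binom{2n}{3}$, so the stability step applies, and we fix a partition $V_1\sqcup V_2$ of minimum edit distance to a union of two cliques. Write $m$ for the number of within-part triples missing from $\mathcal{G}$ and $b$ for the number of crossing triples present, so $|\mathcal{G}|=\binom{|V_1|}{3}+\binom{|V_2|}{3}-m+b$. The core is a local embedding lemma: if $\{a,b,c\}\subseteq V_1$ is a non-edge of $\mathcal{G}$ and one can pick $d\in V_1$ and $e,f,g\in V_2$ with $\{a,b,d\},\{a,c,d\},\{b,c,d\},\{e,f,g\}\in\mathcal{G}$ while all $30$ triples of $S=\{a,b,c,d,e,f,g\}$ meeting both $V_1$ and $V_2$ are non-edges of $\mathcal{G}$, then $\mathcal{G}|_S$ consists of exactly those four edges and so contains no $K_4^3$ --- contradicting property $(7,4)$. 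Since only $o(n^3)$ triples are ``exceptional'' (crossing and present, or within-part and absent), the demanded $d,e,f,g$ can be found unless $\{a,b,c\}$ meets one of an $o(n)$-set of ``bad'' vertices or one of the pairs $ab,ac,bc$ lies in one of an $o(n^2)$-set of ``bad'' pairs; in particular $m=o(n^3)$. To lower $m$ to $0$ I would study how the edit distance changes when a single vertex is moved to the other side --- by minimality of the partition it cannot decrease, which bounds each vertex's within- and cross-degrees --- and iterate the cleaning until no exceptional triple is incident to a bad vertex or pair, forcing $m=0$. Then $|\mathcal{G}|=\binom{|V_1|}{3}+\binom{|V_2|}{3}+b\ge 2\binom{n}{3}$ by convexity of $x\mapsto\binom{x}{3}$, with equality only when $b=0$ and $|V_1|=|V_2|=n$.

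\emph{Main obstacle.} The hard part will be the endgame of the second step: strengthening the bound on the number of exceptional triples from $o(n^3)$ to exactly $0$. The embedding lemma governs a missing within-part triple only when all of its vertices and pairs are ``good'', so one has to rule out \emph{every} bad vertex and bad pair in a minimum configuration, presumably using its edge-minimality (not merely edit-minimality of the partition) --- exactly the delicate stability endgame characteristic of exact hypergraph Tur\'an problems. A lesser obstacle is the stability step itself: it may well fail for the single forbidden hypergraph $\mathcal{R}$, so the near-bipartite structure should likely be built directly from property $(7,4)$ --- i.e.\ from forbidding both $\mathcal{R}_1$ and $\mathcal{R}_2$ --- rather than from $\mathcal{R}$-freeness alone.
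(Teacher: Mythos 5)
First, a point of calibration: the paper does not prove this statement --- it is posed in Section~\ref{sec_cr} as an open conjecture, so there is no proof of record for your attempt to be measured against. What you have submitted is a research programme, not a proof, and its two central steps are left unresolved by your own account. (i) The stability lemma is nowhere established: the paper contains only the asymptotic density result \eqref{eq_r12} via Proposition~\ref{lem_7}, and you concede that stability ``may well fail'' for $\mathcal{R}$-freeness alone and would have to be built directly from $\{\mathcal{R}_1,\mathcal{R}_2\}$-freeness; no argument is offered for how. (ii) The passage from ``$o(n^3)$ exceptional triples'' to ``zero exceptional triples'' --- which is where essentially all of the difficulty of an exact hypergraph Tur\'an result lives --- is only gestured at (``iterate the cleaning until\dots''). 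You identify these as the obstacles rather than overcoming them, so the proposal cannot be accepted as a proof.

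There is also a gap in your opening reduction. The paper uses only one direction of the correspondence: if $\mathcal{G}$ has property $(7,4)$ then $\mathcal{G}^c$ is $\{\mathcal{R}_1,\mathcal{R}_2\}$-free, because $\alpha(\mathcal{R}_i)=3$. The conjecture, however, asks for a lower bound on $|\mathcal{F}^c|$ for \emph{every} $\{\mathcal{R}_1,\mathcal{R}_2\}$-free $\mathcal{F}$, and this class of complements is a priori larger than the class of property-$(7,4)$ hypergraphs; the converse you assert (``precisely when'') amounts to the claim that every $7$-vertex $3$-graph with independence number at most $3$ contains $\mathcal{R}_1$ or $\mathcal{R}_2$, a finite verification from \cite{MR} that is not in this paper and that you invoke without comment. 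This matters for your embedding lemma: as written it derives its contradiction from property $(7,4)$ of $\mathcal{G}$, which you may not assume. The fix is to argue in the complement directly --- the $7$-vertex configuration on $S$ whose only non-edges are the three triples of $\{a,b,c,d\}$ through $d$ and the triple $\{e,f,g\}$ does contain a copy of $\mathcal{R}_1$ (send $3\mapsto d$, $\{4,6,7\}\mapsto\{a,b,c\}$, $\{1,2,5\}\mapsto\{e,f,g\}$, using that $\{3,4,6\},\{3,4,7\},\{3,6,7\},\{1,2,5\}$ are all non-edges of $\mathcal{R}_1$) --- but that check is part of the proof and you did not supply it.
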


\noindent \textbf{Remark. }We would like to thank Alexander Sidorenko for helpful comments on an earlier version of this paper.

\end{document}